\theoremstyle{plain}
\newtheorem{lem}{Lemma}
\newtheorem{prop}[lem]{Proposition}
\newtheorem{thm}[lem]{Theorem}
\newtheorem*{thm*}{Theorem}
\newtheorem*{cor*}{Corollary}
\newtheorem*{prop*}{Proposition}
\theoremstyle{definition}
\newtheorem{defn}[lem]{Definition}
\newtheorem*{defn*}{Definition}
\newtheorem*{ex*}{Example}
\newtheorem{rem}[lem]{Remark}
\newtheorem*{rem*}{Remark}
\theoremstyle{remark}
\DeclareMathOperator{\cl}{cl}
\DeclareMathOperator{\Lip}{Lip}
\DeclareMathOperator{\supp}{supp}
\renewenvironment{proof}{\medbreak{\noindent\em Proof }}{~{\hfill$\bullet$\bigbreak}}
\def\d{{\rm d}}
\begin{document}

\title{The Central Limit Theorem for Function Systems on the Circle}
\author{Tomasz Szarek and Anna Zdunik}
\address{Tomasz Szarek, Institute of Mathematics, University of Gda\'nsk, Wita Stwosza 57, 80-952 Gda\'nsk, Poland}
\email{szarek@intertele.pl}
\address{Anna Zdunik, Institute of Mathematics, University of Warsaw,
ul.~Banacha~2, 02-097 Warszawa, Poland}
\email{A.Zdunik@mimuw.edu.pl}

\subjclass[2000]{ Primary 60F05, 60J25; Secondary 37A25, 76N10.}

\keywords{iterated function systems, Markov operators, invariant measures, central limit theorems}

\thanks{The research partially supported by the
  Polish NCN grants 2016/21/B/ST1/00033 (Tomasz Szarek) and 2014/13/B/ST1/04551 (Anna Zdunik).}
\begin{abstract} The Central Limit Theorem for iterated functions systems on the circle is proved. We study also ergodicity of such systems.
\end{abstract}

\maketitle

\section{Introduction}
In this paper we deal with an iterated function systems (ifs -- for short)  generated by finite families of homeomorphisms of the circle.

Our main goals are the following: first,  to prove the Central Limt Theorem {\bf (CLT)} for Lipschitz continuous observables, and the Markov process generated by the  ifs. This is done under natural mild assumptions, i.e, minimality of the action of the corresponding semigroup on the circle. No additional regularity of the maps is required. In this way, we answer the question which was left open in our previous paper \cite{szarek_zdunik}. The proof is based on the result due to Maxwell and Woodroofe (see \cite{MW}) which provides a sufficient condition for the Central Limit Theorem for an arbitrary stationary Markov chain. It is worth mentioning here that our considerations allow us to show the CLT for ifs's starting at an arbitrary initial distribution. Similar result has been obtained recently by Komorowski and Walczuk (see \cite{KW}) but developed techniques allow them to consider only Markov chains satisfying spectral gap property in the Wasserstein metric.

Our second purpose  is to provide some insights into Markov operators with the e-property.
The e--property is  a very useful tool in studying ergodic properties of Markov operators and semigroups of Markov operators. It was introduced to deal with stochastic partial differential equations in infinite dimensional Hilbert spaces (see for instance \cite{KPS}) but it is also very helpful while studing ifs's. 

In Sections \ref{section:e_property}  and ~\ref{section:ergodicity}  we show how this property can be easily verified and then used to provide alternative  proofs of some known results: ergodicity and asymptotic stability  of the iterated function systems, again, under natural mild assumptions.

\section{Notation and basic information about Markov operators.}

Since we shall deal with systems on the circle, we restrict this short presentation to the case of compact metric spaces. The general theory is developed for Polish spaces.

Let $(S, d)$ be a compact metric  space. By $\mathcal M_1(S)$  we denote the set of all  probability measures  on  the $\sigma$--algebra of Borel sets $\mathcal B(S)$. 
By $C(S)$ we denote the family of all continuous functions equipped with the supremum norm $\|\cdot\|$ and by $\Lip (S)$ we denote the family of all Lipschitz functions. For $f\in \Lip (S)$ by $\Lip f$ we denote its Lipschitz constant. For brevity we shall use the notion of scalar product:
$$
\langle f, \mu\rangle :=\int_S f(x)\mu (\d x)
$$
for any bounded Borel measurable function $f: S\to\mathbb R$ and $\mu\in\mathcal M_1 (S)$.




An operator $P: \mathcal M (S)\to \mathcal M (S)$ is called a {\it Markov operator} if it satisfies the following two conditions:
\begin{itemize}
\item[{\bf 1)}] positive linearity: $P(\lambda_1\mu_1+\lambda_2\mu_2)=\lambda_1 P\mu_1+\lambda_2 P\mu_2$ for $\lambda_1, \lambda_2\ge 0$, $\mu_1, \mu_2\in\mathcal M (S)$;
\item[{\bf 2)}] preservation of the norm: $P\mu (S)=\mu (S)$ for $\mu\in\mathcal M (S)$.
\end{itemize}

A Markov operator $P$ is called a {\it Feller operator} if there is a linear operator $U: C(S)\to C(S)$  such that
$$
\int_{S} Uf(x)\mu (\d x)=\int_{S} f(x) P\mu(\d x)\quad\text{for $f\in C(S)$, $\mu\in\mathcal M $.}
$$

A Markov operator $P: \mathcal M (S)\to \mathcal M (S)$ is called {\it nonexpansive} (with respect to the Wasserstein metric) if
$$
W_1(P\mu, P\nu)\le W_1(\mu, \nu)\qquad\text{for any $\mu, \nu\in\mathcal M_{1} (S)$.}
$$

A measure $\mu_*$ is called {\it invariant} if $P\mu_*=\mu_*$. An operator $P$ is called {\it asymptotically stable} if it has a unique invariant measure $\mu_*\in\mathcal M_1(S)$ such that
the sequence $(P^n\mu)_{n\ge 1}$ converges in the $*$--weak topology to $\mu_*$ for any $\mu\in\mathcal M_1(S)$, i.e.,
$$
\lim_{n\to\infty}\int_{S} f(x) P^n\mu (\d x)=\int_{S} f(x)\mu_*(\d x)
$$
for any $f\in C(S)$.

For any Markov operator $P$ we define the the multifunction $\mathcal P:  2^S\to 2^S$ by the formula
$$
\mathcal P(A)=\bigcup_{x\in A}\supp P\delta_x\qquad\text{for $A\subset S$}.
$$
\section{E-property}\label{section:e_property}
The e--property seems to be a very useful tool in studying ergodic properties of Markov operators and semigroups of Markov operators on Polish spaces.
Following \cite{KPS}, we say that a Feller operator $P$ satisfies the {\it e--property} if for any $x\in S$ and a Lipschitz  function $f\in C(S)$ we have 
$$
\lim_{y\to x} \sup_{n\in\mathbb N}|U^n f (y) - U^n f (x)|=0,
$$
i.e., if the family of iterates $\{U^n f: n\in\mathbb N\}$ is equicontinuous.

\begin{prop}\label{prop1} Let $P$ be a Feller operator. If $P$ satisfies the e--property, then 
$$
\supp \mu\cap\supp\nu=\emptyset
$$
for any different ergodic invariant measures $\mu, \nu\in {\mathcal M}(S)$.
\end{prop}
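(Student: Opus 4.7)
The plan is to argue by contradiction: assume there exists a point $x\in\supp\mu\cap\supp\nu$, and derive $\mu=\nu$. The intuition is that the e-property forces the Cesaro averages of $U^n f$ to behave consistently near $x$, while the ergodic theorem forces those same averages to equal $\langle f,\mu\rangle$ in $\supp\mu$ and $\langle f,\nu\rangle$ in $\supp\nu$.

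First I would record the pointwise Markov ergodic theorem: for any bounded Borel $f$ and any $P$-ergodic invariant measure $\mu$, the Birkhoff theorem applied to the shift on the trajectory space under $\mathbb P_\mu=\int\mathbb P_y\,\mu(\d y)$ gives $\frac1n\sum_{k=0}^{n-1}f(X_k)\to \langle f,\mu\rangle$, $\mathbb P_\mu$-a.s. In particular this holds $\mathbb P_y$-almost surely for $\mu$-a.e.\ $y$, and by bounded convergence
\[
A_n f(y):=\tfrac1n\sum_{k=0}^{n-1}U^k f(y)\;\longrightarrow\;\langle f,\mu\rangle
\]
for $\mu$-almost every $y\in S$. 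The analogous statement holds for $\nu$. Next I would fix any Lipschitz $f\in C(S)$ and use the e-property: the family $\{U^n f\}_{n\in\N}$ is equicontinuous at $x$, and averaging preserves equicontinuity, so for every $\varepsilon>0$ there is $\delta>0$ with $|A_n f(y)-A_n f(x)|<\varepsilon$ for all $n$ and all $y$ with $d(y,x)<\delta$.

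Now I would exploit that $x\in\supp\mu\cap\supp\nu$: the ball $B(x,\delta)$ has positive $\mu$-measure and positive $\nu$-measure, so the full-measure sets on which $A_n f$ converges to $\langle f,\mu\rangle$ and to $\langle f,\nu\rangle$ each meet $B(x,\delta)$. Pick such $y_\mu, y_\nu\in B(x,\delta)$. Then for every $n$,
\[
|A_n f(y_\mu)-A_n f(y_\nu)|\le |A_n f(y_\mu)-A_n f(x)|+|A_n f(x)-A_n f(y_\nu)|<2\varepsilon,
\]
and passing to the limit in $n$ yields $|\langle f,\mu\rangle-\langle f,\nu\rangle|\le 2\varepsilon$. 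Since $\varepsilon$ was arbitrary, $\langle f,\mu\rangle=\langle f,\nu\rangle$ for every Lipschitz $f$. Lipschitz functions are dense in $C(S)$, hence $\mu=\nu$, contradicting the hypothesis.

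The main technical point is the pointwise convergence $A_nf(y)\to\langle f,\mu\rangle$ on a set that genuinely hits every neighborhood of $x$; this is where ergodicity of $\mu$ (rather than mere invariance) is used to identify the Birkhoff limit with a constant, and where $x\in\supp\mu$ is used to guarantee that the $\mu$-full-measure set of ``good'' starting points is not vacuously far from $x$. Everything else (equicontinuity of Cesaro averages, density of Lipschitz functions) is routine once this is in place.
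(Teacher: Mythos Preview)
Your proof is correct and follows essentially the same approach as the paper: both combine the Birkhoff ergodic theorem (to get $\mu$-a.e.\ convergence of Ces\`aro averages to $\langle f,\mu\rangle$) with the e--property (to transfer this information to any point of $\supp\mu$). The paper simply packages this mechanism into a citation of an external lemma stating that $\frac{1}{n}\sum_{k=1}^n P^k\delta_x\to\mu$ weakly for every $x\in\supp\mu$, whereas you unpack the argument directly and only need the slightly weaker almost-everywhere statement together with local equicontinuity.
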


\begin{proof} The proof may be derived from \cite{HHSz} (see also \cite{KSzS, KPS}). Indeed, in Lemma 3.4 we proved that if $x\in\supp\mu$, where  $\mu\in\mathcal M_1(S)$ is an ergodic invariant measure, then the sequence
$(n^{-1}\sum_{i=1}^n P^n\delta_x)_{n\ge 1}$ converges in the $*$--weak topology to $\mu$.
Hence our assertion follows immediately.
\end{proof}

D. Worm slightly generalized the e--property introducing the Ces\'aro e--property (see  \cite{Worm-thesis:2010}). Namely,  a Feller operator $P$ will satisfy the {\it Ces\'aro e--property} at $x\in S$ if for any Lipschitz  function $f\in C(S)$ we have 
$$
\lim_{y\to x} \sup_{n\in\mathbb N}\left|\frac{1}{n}\sum_{k=1}^n U^k f (y) - \frac{1}{n}\sum_{k=1}^n U^k f (x)\right|=0.
$$

For Feller operators with the Ces\'aro e--property  the following proposition holds. Its    proof is the same as the proof of  Proposition 2 in \cite{KSzS}.

\begin{prop}\label{prop1} Let $(S, d)$ be a compact metric space and let $P$ be a Feller operator. Assume that there exists an open subset $S_0\subset S$ such that $\mathcal P(S_0)\subset S_0$ and $\mu (S_0)=1$ for any invariant measure $\mu\in\mathcal M_1(S)$.  If $P$ satisfies the Ces\'aro e--property at any point $x\in S_0$, then for any ergodic invariant measure $\mu_*\in\mathcal M_1(S)$ and every $x\in S_0\cap\supp\mu_*$ the sequence $(\frac{1}{n}\sum_{k=1}^n P^n\delta_x)_{n\ge 1}$ converges weakly to $\mu_*$.
\end{prop}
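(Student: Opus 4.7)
The plan is to prove that for every Lipschitz $f \in C(S)$,
$$
\frac{1}{n}\sum_{k=1}^{n} U^k f(x) \longrightarrow \int_S f\, d\mu_*,
$$
and then to upgrade this to weak convergence of the Cesàro averages $\mu_n := \frac{1}{n}\sum_{k=1}^n P^k \delta_x$ by using density of Lipschitz functions in $C(S)$. The structure of the argument is standard for e--type properties: produce a large set of ``generic'' points where the ergodic theorem gives convergence, then transfer that convergence to $x$ using equicontinuity of the Cesàro averages.

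First, I would establish a set of generic points. Consider the canonical stationary Markov chain $(X_k)_{k\ge 0}$ with one-step kernel $P$ and initial law $\mu_*$. Since $\mu_*$ is $P$--ergodic, the shift on the path space $S^{\mathbb N}$ is ergodic, so Birkhoff's theorem gives, for every $f\in C(S)$, that $\frac{1}{n}\sum_{k=1}^n f(X_k) \to \int f\,d\mu_*$ almost surely. Conditioning on $X_0=y$ and applying bounded convergence for conditional expectations yields
$$
\frac{1}{n}\sum_{k=1}^{n} U^k f(y) \longrightarrow \int_S f\, d\mu_*
$$
for $\mu_*$--a.e.\ $y$. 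By separability of $C(S)$, one may choose a single set $A\subset S$ with $\mu_*(A)=1$ on which this convergence holds for every $f\in C(S)$.

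Second, I would use the hypotheses to transfer convergence to $x$. Since $x \in \supp\mu_*$, every open neighborhood $V$ of $x$ has $\mu_*(V)>0$ and therefore meets $A$. Fix a Lipschitz $f$ and $\varepsilon>0$. The Cesàro e--property at $x\in S_0$ provides a neighborhood $V\subset S_0$ of $x$ with
$$
\sup_{n\ge 1}\left|\frac{1}{n}\sum_{k=1}^n U^k f(y)-\frac{1}{n}\sum_{k=1}^n U^k f(x)\right|<\varepsilon \quad\text{for all } y\in V.
$$
Choosing $y\in V\cap A$ and letting $n\to\infty$ gives $\limsup_n |\frac{1}{n}\sum_{k=1}^n U^k f(x)-\int f\,d\mu_*|\le\varepsilon$. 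As $\varepsilon$ was arbitrary, the averages at $x$ converge to $\int f\,d\mu_*$ for each Lipschitz $f$; the assumptions $\mathcal{P}(S_0)\subset S_0$ and $\mu_*(S_0)=1$ ensure that the generic points stay inside $S_0$ and that the setup is coherent, although only $x\in S_0$ is needed to invoke the e--property.

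Finally, since the $\mu_n$ are probability measures on the compact metric space $S$, and Lipschitz functions are uniformly dense in $C(S)$, a standard $3\varepsilon$--argument extends $\langle f,\mu_n\rangle\to\langle f,\mu_*\rangle$ from Lipschitz $f$ to all $f\in C(S)$, yielding weak convergence $\mu_n\to\mu_*$. The main obstacle is the first step: producing a single $\mu_*$--full-measure set of generic points that works simultaneously for all $f\in C(S)$. Once this ``individual'' ergodic theorem is in hand, the e--property provides exactly the uniformity in $n$ needed to push convergence from the nearby generic point $y$ to the specific point $x$, and the rest is routine.
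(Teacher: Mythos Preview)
Your argument is correct and follows the standard route: produce a $\mu_*$--full set of generic points via Birkhoff's theorem on the path space (plus dominated convergence to pass to $U^k f$), transfer convergence to $x$ using the Ces\`aro e--property at $x\in S_0$ together with $x\in\supp\mu_*$, and finish by density of Lipschitz functions in $C(S)$. The paper does not spell out a proof but simply refers to Proposition~2 in \cite{KSzS}, whose argument is exactly of this type, so your approach coincides with the one the paper has in mind; note also that, as you observe, the invariance $\mathcal P(S_0)\subset S_0$ and $\mu(S_0)=1$ are not strictly needed in the displayed argument beyond having the Ces\`aro e--property available at the specific point $x$.
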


\section{Ergodicity for iterated function systems on the circle}\label{section:ergodicity}

Iteration of homeomorphisms on the circle has been widely studied recently. For further references see \cite{Navas, ghys, navas2, szarek_zdunik} and the references therein. The main purpose of this section is to prove that Markov operators corresponding to iterated function systems on the circle have strong metric properties, i.e. nonexpansiveness, the e--property and Ces\'aro e-property. These properties imply straightforwardly the ergodic properties of the systems. In this way we may easily derive ergodicity under the most general condition on the system (see \cite{malicet}).

Let $\mathbf{S}^1$ denote the circle with the counterclockwise orientation. We will denote by $[x, y]$ the closed interval form $x$ to $y$ according to this orientation. The distance between $x, y\in \mathbf{S}^1$ is the shorter of the lengths of the intervals $[x, y]$ and $[y, x]$.  We will denote this distance by $d (x, y)$. 

By $H^+$ we shall denote the set of all orientation preserving circle homeomorphisms.
Let $\Gamma=\{g_1,\dots,g_k\}\subset H^+$ be a finite collection of homeomorphisms. Put $\Sigma_n=\{1,\dots, k\}^n$,  and let $\Sigma_*=\bigcup_{n=1}^\infty\Sigma_n$ be the collection of all finite words with entries from $\{1,\dots ,k\}$.
For a sequence ${\bf i}\in\Sigma_*$, ${\bf i}=(i_1,\dots ,i_n)$, we denote by $|{\bf i}|$ its length (equal to $n$).  We denote by $\Sigma$ the product space  $\{1,\dots, k\}^{\mathbb N}$.

 We consider the action of the semigroup generated by  $\Gamma$, i.e., the action of all compositions $g_{\bf i}=g_{i_n, i_{n-1},\ldots, i_1}=g_{i_n}\circ g_{i_{n-1}}\circ\cdots\circ g_{i_1}$, where ${\bf i}=(i_1,\dots i_n)\in\Sigma_*$.   
 \begin{defn}
 The orbit of a point $x\in \mathbf{S}^1$ is the set 
 $$
 \mathcal O(x)=\{g_{\bf i}(x): {\bf i}\in \Sigma_*\}.
 $$

 

 In the case when all the orbits are dense the action of $\Gamma$ is called {\it minimal}. Equivalently, the action of $\Gamma$ is minimal if for every $\Gamma$--invariant closed subset $A\subset \mathbf{S}^1$ either $A=\emptyset$ or $A=\mathbf{S}^1$.
\end{defn} 
 
  \vskip3mm
 
 Let ${\bf p}= (p_1,\dots p_k)$ be a probability distribution on $\{1,\dots,k\}$.  We denote by $\mathbb P$ the product probability distribution on $\Sigma$.  Clearly, $\bf p$  defines a probability distrubution $p$ on   $\Gamma$, by putting $p(g_j)=p_j$. We assume that all $p_i$'s are strictly positive. The pair $(\Gamma, {\bf p})$ will be called an {\it iterated function system}.
 
The Markov operator $P: \mathcal M (\mathbf{S}^1)\to\mathcal M (\mathbf{S}^1)$ of the form
$$
P\mu=\sum_{g\in\Gamma} p(g)\mu\circ g^{-1},
$$
where $\mu\circ g^{-1}(A)=\mu (g^{-1}(A))$ for $A\in\mathcal B(\mathbf S^1)$, describes the evolution of distribution due to action of randomly chosen homeomorphisms from the collection $\Gamma$. 
It is a Feller operator, i.e., the operator $U: C(\mathbf S^1)\to C(\mathbf S^1)$ given by the formula
$$
U f(x)=\sum_{g\in\Gamma} p(g) f(g(x))\qquad\text{for $f\in C(\mathbf S^1)$ and $x\in\mathbf S^1$}
$$
is its dual.
We shall illustrate   usefulness of the notion of the e-property, providing a very simple proof of the following:
\begin{prop} Let $\Gamma^{-1}=\{g_1^{-1},\ldots, g_k^{-1}\}$ act minimally and  let ${\bf p}=(p_1,\dots p_k)$ be a probability distribution on $\{1,\dots,k\}$. Then the operator $P$ corresponding to the iterated function system $(\Gamma, {\bf p})$ satisfies the e--property.
Moreover, $P$ admits a unique invariant measure. 
\end{prop}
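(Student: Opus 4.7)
\emph{Plan.} I would split the proof into two parts: (A) establishing the e-property, and (B) deducing uniqueness.

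\emph{Part A (e-property).} Fix $x\in\mathbf{S}^1$ and a Lipschitz $f$ with constant $L$. The basic estimate
$$|U^nf(y)-U^nf(x)| \le L\sum_{\mathbf{i}\in\Sigma_n}\mathbb P(\mathbf{i})\,d\bigl(g_{\mathbf{i}}(x),g_{\mathbf{i}}(y)\bigr)$$
reduces the problem to bounding, uniformly in $n$, the expected distance between coupled trajectories started at $x$ and $y$ and driven by a common random word. I would track the random arc $A_n$ running counterclockwise from $g_{\mathbf{i}_n}(x)$ to $g_{\mathbf{i}_n}(y)$: orientation preservation gives $A_{n+1}=g_{\xi_{n+1}}(A_n)$, so the arc length $\ell_n=|A_n|\in[0,2\pi]$ is a Markov chain and $d(g_{\mathbf{i}_n}(x),g_{\mathbf{i}_n}(y))=\min(\ell_n,2\pi-\ell_n)$. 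Hence the distance is small whenever $\ell_n$ is close to $0$ or close to $2\pi$, so the goal is to show that the process $\ell_n$ spends most of its time in these two regimes. Minimality of $\Gamma^{-1}$ enters through the observation that for any open arc $J$ the set $\bigcup_{\mathbf{j}\in\Sigma_*}g_{\mathbf{j}}(J)$ equals $\mathbf{S}^1$ (since for every $z$ some $g_{\mathbf{j}}^{-1}(z)$ lies in $J$), and by compactness finitely many words already cover. Combining this with a uniform continuity argument over the compact parameter space of arcs of length in $[\varepsilon,2\pi-\varepsilon]$ yields an $N_0\in\mathbb N$ and a $\theta>0$ such that from any such intermediate arc, a random word of length $\le N_0$ has probability at least $\theta$ of sending it to an arc of length $\ge 2\pi-\varepsilon$. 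A hitting-time / Borel--Cantelli style computation then controls $\mathbb E[\min(\ell_n,2\pi-\ell_n)]$ uniformly in $n$, and this bound tends to $0$ as $\ell_0=d(x,y)\to 0$, which is exactly the e-property.

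\emph{Part B (uniqueness).} Existence of an invariant measure is immediate from the Krylov--Bogolyubov theorem since $\mathbf{S}^1$ is compact and $P$ is Feller. Suppose two distinct ergodic invariant measures $\mu_1,\mu_2$ exist. By the first proposition of Section \ref{section:e_property} their supports are disjoint. Each $\supp\mu_i$ is closed and $\Gamma$-forward invariant: from $\mu_i=P\mu_i=\sum_g p_g\,g_\ast\mu_i$ with all $p_g>0$ one deduces $g(\supp\mu_i)\subset\supp\mu_i$. Consequently $\mathbf{S}^1\setminus\supp\mu_i$ is open and $\Gamma^{-1}$-invariant, and its closure is closed and $\Gamma^{-1}$-invariant; by minimality of $\Gamma^{-1}$ this closure must be empty or all of $\mathbf{S}^1$. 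So each $\supp\mu_i$ is either $\mathbf{S}^1$ or nowhere dense, and disjointness rules out $\supp\mu_i=\mathbf{S}^1$. In the remaining case where both supports are nowhere dense, I would invoke the Ces\`aro e-property (which follows trivially from the e-property) together with the second proposition of Section \ref{section:e_property} applied with $S_0=\mathbf{S}^1$: for any ergodic invariant $\mu_i$ and every $x\in\supp\mu_i$ the Ces\`aro averages $n^{-1}\sum_k P^k\delta_x$ converge to $\mu_i$. Using ergodicity of $\mu_i$ one checks that the basin $D_i=\{y: n^{-1}\sum_kP^k\delta_y\to\mu_i\}$ is $\Gamma$-forward invariant, because the decomposition $\mu_i=\sum_h p_h\nu_h$ of the limit forces each subsequential limit $\nu_h$ of $n^{-1}\sum_k P^k\delta_{h(y)}$ to coincide with $\mu_i$. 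A final closure/minimality argument applied to $\overline{D_i}$ together with the continuity of the limit functional $\Phi_f(y)=\lim n^{-1}\sum U^kf(y)$ inherited from the e-property rules out $D_1,D_2$ being simultaneously proper and disjoint, yielding the required contradiction.

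\emph{Main obstacle.} The delicate step is Part A: turning the purely topological content of $\Gamma^{-1}$-minimality (every arc covers the circle under forward images) into a uniform-in-$n$ probabilistic estimate for coupled trajectories. The arc-length Markov chain $\ell_n$ has two ``stable'' regimes near $0$ and near $2\pi$, both corresponding to small endpoint distance, and the heart of the proof lies in obtaining a uniform lower bound for the probability of transiting from intermediate values of $\ell_n$ into one of these regimes within a bounded number of steps.
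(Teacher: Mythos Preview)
Your Part~A contains a genuine gap. The covering argument you sketch yields, at best, a uniform positive probability of driving an arc of intermediate length into the regime $\ell\ge 2\pi-\varepsilon$ within $N_0$ steps; but this says nothing about how the process behaves once it is near $0$ or near $2\pi$, nor does it constrain the probability of \emph{leaving} a neighbourhood of $0$ when $\ell_0$ is small. A hitting-time estimate of this kind gives recurrence to the boundary regimes, not persistence there, and without a Lyapunov function or martingale structure there is no mechanism that forces $\sup_n\mathbb E[\min(\ell_n,2\pi-\ell_n)]\to 0$ as $\ell_0\to 0$. You correctly flag this step as the main obstacle, but the ``hitting-time / Borel--Cantelli style computation'' you invoke does not produce a bound with the required dependence on the initial value.

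The paper supplies precisely the missing structure, and this is the idea you lack. Pick any invariant probability $\tilde\mu$ for the \emph{inverse} system $(\Gamma^{-1},{\bf p})$; minimality of $\Gamma^{-1}$ forces $\supp\tilde\mu=\mathbf S^1$ and $\tilde\mu$ atomless. Since $\tilde\mu(A)=\sum_g p(g)\,\tilde\mu(g(A))$ and $g([x,y])=[g(x),g(y)]$, the quantity $\tilde\mu$ of your tracked arc is an exact martingale. Equivalently, $\chi(x,y):=\min\bigl(\tilde\mu([x,y]),\tilde\mu([y,x])\bigr)$ is a metric equivalent to $d$ in which $U$ is nonexpansive: $|Uf(x)-Uf(y)|\le\chi(x,y)$ whenever $f$ is $1$-Lipschitz for $\chi$. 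The e-property follows in one line, with no probabilistic estimates at all.

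Your Part~B is also more circuitous than the paper's, and the ``final closure/minimality argument'' is not a proof as written: you have not shown that the Ces\`aro limit $\Phi_f(y)=\lim_n n^{-1}\sum_k U^kf(y)$ exists for \emph{every} $y\in\mathbf S^1$ (Proposition~\ref{prop1} only gives it on the supports), so continuity of $\Phi_f$ and the basin argument are unjustified. The paper instead reuses $\tilde\mu$: among the open arcs separating $\supp\mu$ from $\supp\nu$ there is one of minimal $\tilde\mu$-mass, the martingale identity forces each $g$ to permute the finite family of minimal-mass arcs, and the finite set of their endpoints is then $\Gamma$-invariant, hence $\Gamma^{-1}$-invariant, contradicting minimality of $\Gamma^{-1}$.
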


\begin{proof} Let $\tilde\mu\in\mathcal M_1(\mathbf S^1)$ be an arbitrary invariant measure for the iterated function system
$(\Gamma^{-1}, p)$. Since $\Gamma^{-1}$ acts minimally, the support of $\tilde\mu$ equals ${\mathbf S}^{1}$. We easily check that $\tilde\mu$ has no atoms. To do this take the atom $u$ with maximal measure. From the fact that $\tilde\mu$ is invariant for $P$ we obtain that
$F=\{v\in {\mathbf S}^{1}: \tilde\mu(\{v\})=\tilde\mu(\{u\})\}$ is invariant for $\Gamma$ and consequently it is also invariant for $\Gamma^{-1}$, i.e., $g_i(F)=F$ and $g_i^{-1}(F)=F$ for $i=1,\ldots, k$. This contradicts the assumtion that $\Gamma^{-1}$ acts minimally. Indeed, from the fact that
$$
\tilde\mu(\{v\})=\sum_{i=1}^k p_i\tilde\mu(\{g_i(v)\}),
$$
we obtain that $\tilde\mu(\{g_i(v)\})=\tilde\mu(\{v\})$ for all $i=1,\ldots, k$. Since $g_i$ are homeomorphisms and the set $F$  is finite we obtain that $g_i(F)=F$ and $g_i^{-1}(F)=F$ for $i=1,\ldots, k$, which is impossible, since the action of $\Gamma^{-1}$ is minimal.

Define the function $\chi:{\mathbf S}^{1}\times {\mathbf S}^{1}\to\mathbb R_+$ by the formula
$$
\chi(x, y)=\min \left(\tilde\mu([x, y]), \tilde\mu([y, x])\right)\qquad\text{for $x, y\in{\mathbf S}^{1}$.}
$$
It is straigtforward to check that $\chi$ is a metric and convergence in $\chi$ is equivalent to the convergence in $d$.

Further, we may check that for any function $f: {\mathbf S}^{1}\to\mathbb R$ satisfying
$|f(x)-f(y)|\le \chi(x, y)$ for $x, y\in{\mathbf S}^{1}$ we have
$$
|Uf(x)-Uf(y)|\le \chi(x, y)\qquad\text{for $x, y\in{\mathbf S}^{1}$.}
$$
This follows from the definition of the operator $U$ and the fact that $|f(x)-f(y)|\le \tilde\mu([x, y])$ and
$|f(x)-f(y)|\le \tilde\mu([y, x])$. Indeed, we have
$$
|Uf(x)-Uf(y)|\le\sum_{g\in\Gamma} p(g)|f(g(x))-f(g(y)|\le \sum_{g\in\Gamma} p(g)\tilde\mu([g(x), g(y)]=\tilde\mu([x, y])
$$
and analogously
$$
|Uf(x)-Uf(y)|\le\sum_{g\in\Gamma} p(g)|f(g(x))-f(g(y)|\le \sum_{g\in\Gamma} p(g)\tilde\mu([g(y), g(x)]=\tilde\mu([y, x])
$$
and hence
$$
|Uf(x)-Uf(y)|\le\chi(x, y)
$$
for any $x, y\in{\mathbf S}^{1}$.
This finishes the proof of the e-property of the operator $P$. 

To complete the proof of our theorem we would like to apply Proposition \ref{prop1}. Therefore we have to check that $\supp\mu\cap\supp\nu\neq\emptyset$ for any $\mu, \nu\in\mathcal M_1({\mathbf S}^{1})$ invariant for $P$.  Assume, contrary to our claim, that $\supp\mu\cap\supp\nu=\emptyset$. Take the set $\Lambda$ of all intervals $I\subset {\mathbf S}^{1}\setminus (\supp\mu\cup\supp\nu)$ such that one of its ends belongs to $\supp\mu$ but the second to $\supp\nu$. Observe that $\tilde\mu (I)>0$ for all $I\in\Lambda$ and that, by compactness, there exists $I_0$ such that $\tilde\mu(I_0)=\inf_{I\in\Lambda}\tilde\mu (I)$. We easily see that $g(I_0)\in\Lambda$. Indeed, we have
\begin{equation}\label{e1_5.01.17}
\tilde\mu(I_0)=\sum_{g\in\Gamma} p(g)\tilde\mu (g(I_0))
\end{equation}
and since the interval $g(I_0)$ has the ends belonging to $\supp\mu$ and to $\supp\nu$, for $I_0$ had and $g(\supp\mu)\subset\supp\mu$ and $g(\supp\nu)\subset\supp\nu$ for all $g\in\Gamma$, we obtain that there is $J\in\Lambda$ such that $J\subset  g(I_0)$. Hence $\tilde\mu (g(I_0))\ge \tilde\mu(J)\ge\tilde\mu(I_0)$ and from equation (\ref{e1_5.01.17}) it follows that $\tilde\mu (g(I_0))=\tilde\mu(I_0)$ and consequently $g(I_0)\in\Lambda$. Otherwise there would be $h\in\Gamma$ and $J\subset h(I_0)$, $J\in\Lambda$ and $\tilde\mu(J)<\tilde\mu(h(I_0))\le\tilde\mu(I_0)$, by the fact that $\supp\tilde\mu={\mathbf S}^{1}$, contrary to the definition of $I_0$. Finally, observe that the set
$$
\mathcal H=\{J\in\Lambda: \tilde\mu(J)=\tilde\mu(I_0)\}
$$
 is finite and all its elements are disjoint open intervals. Further $g(\mathcal H)\subset\mathcal H$ and consequently
 $g(\mathcal H)=\mathcal H$ for any $g\in\Gamma$, by the fact that $g$ is a homeomorphism. 
 Consequently, for any $g$ and the finite set $F$ of all ends of the intervals $J$ from $\mathcal H$ we have
 $g(F)\subset F$ and therefore $g(F)=F$. Hence $g^{-1}(F)=F$ for any $g\in\Gamma$ and consequently $\Gamma^{-1}$ is not minimal, contrary to our assumption.
 \end{proof}
 
 The following theorem was proved in \cite{malicet}, with the proof involving a generalization of Lyapunov exponents. We want to provide a very simple argument, based only on the (independently proved) e-property.
\begin{thm} If $\Gamma=\{g_1,\ldots, g_k\}$ acts minimally, then for any probability vector ${\bf p}=(p_1,\dots p_k)$ the iterated function system $(\Gamma, {\bf p})$  admits a unique invariant measure. 
\end{thm}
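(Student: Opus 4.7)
My plan is to adapt the proof of the preceding proposition, using an auxiliary invariant measure for the \emph{reverse} iterated function system. By Krylov--Bogolyubov, the Markov operator $\tilde P$ associated with $(\Gamma^{-1},\mathbf p)$ admits an invariant probability measure $\tilde\mu$. The atom-chasing argument from the preceding proposition transfers unchanged: if $F$ is the finite set of $\tilde\mu$-atoms of maximal mass, the identity $\tilde\mu(\{v\})=\sum_i p_i\tilde\mu(\{g_i(v)\})$ forces $g_i(F)=F$ for each $i$, so $F$ is a finite $\Gamma$-invariant subset of $\mathbf S^1$, hence empty by minimality; thus $\tilde\mu$ is atomless. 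The same non-expansiveness computation then gives, for $\chi(x,y):=\min(\tilde\mu([x,y]),\tilde\mu([y,x]))$, the bound $|U^n f(x)-U^n f(y)|\le\chi(x,y)$ for every $n\ge 1$ and every $\chi$-Lipschitz $f$.

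The crucial step is to upgrade this to the genuine e-property of $P$, which requires $\supp\tilde\mu=\mathbf S^1$ so that $\chi$ metrizes the topology of the circle. Write $K:=\supp\tilde\mu$; invariance of $\tilde\mu$ gives $K\subseteq g_i(K)$ for each $i$, i.e., $g_i(\mathbf S^1\setminus K)\subseteq\mathbf S^1\setminus K$. First, $K$ cannot be finite (the same $\Gamma$-invariance argument as for atoms rules it out); second, if $K\subsetneq\mathbf S^1$ then $K$ must have empty interior, since the closure of the $\Gamma$-forward-invariant open set $\mathbf S^1\setminus K$ is a closed $\Gamma$-forward-invariant subset of $\mathbf S^1$ and must therefore equal $\mathbf S^1$ by minimality. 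So if $K\subsetneq\mathbf S^1$ then $K$ is a Cantor set, and the plan is to rule this out by a variant of the $I_0$-minimality argument used at the end of the preceding proof: each $g\in\Gamma$ permutes the gaps of $K$ (mapping each gap into a gap), and the invariance identity applied to the closure of a gap should let one extract a finite $\Gamma$-invariant subfamily of gaps, whose set of endpoints is a nontrivial finite $\Gamma$-invariant subset of $\mathbf S^1$---contradicting $\Gamma$-minimality. This ``no proper Cantor support'' step is the main obstacle, because the endpoints of gaps need not be preserved by individual $g_i$ (one has $K\subseteq g_i(K)$ but possibly $g_i(K)\supsetneq K$), so some care is required in identifying the finite invariant object.

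Once $\supp\tilde\mu=\mathbf S^1$ has been established, $\chi$ is equivalent to $d$, the family $\{U^n f\}$ is equicontinuous, and $P$ has the e-property. Any $P$-invariant probability $\mu$ satisfies $g(\supp\mu)\subseteq\supp\mu$ for every $g\in\Gamma$, so $\supp\mu=\mathbf S^1$ by minimality. If two distinct ergodic $P$-invariant measures $\mu_1,\mu_2$ existed, Proposition~\ref{prop1} would force $\supp\mu_1\cap\supp\mu_2=\emptyset$, contradicting $\supp\mu_1=\supp\mu_2=\mathbf S^1$. Hence $P$ admits a unique invariant probability measure.
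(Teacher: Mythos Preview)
Your reduction to $\supp\tilde\mu=\mathbf S^1$ is exactly the paper's opening move, and once that is granted your argument is correct and coincides with the paper's: $\chi$ is then a genuine metric equivalent to $d$, $P$ has the e--property, every $P$--invariant measure has full support by $\Gamma$--minimality, and Proposition~\ref{prop1} forces uniqueness.

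The problem is the case $\supp\tilde\mu\subsetneq\mathbf S^1$, which you yourself flag as ``the main obstacle'' and do not actually resolve. Your plan of extracting a finite $\Gamma$--invariant family of gap endpoints does not go through as stated: from $\tilde P\tilde\mu=\tilde\mu$ one only gets $g_i(K^c)\subseteq K^c$, so each $g_i$ sends gaps \emph{into} gaps rather than onto gaps, and there is no evident finite invariant subfamily. The gaps have $\tilde\mu$--measure zero, so the ``interval of minimal $\tilde\mu$--measure'' trick from the preceding proposition has nothing to grab, and any length--based selection fails because the $g_i$ distort arbitrarily. This is a genuine gap, not just missing care.

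The paper does \emph{not} attempt to rule this case out. Instead it exploits it. By the preceding proposition (with the roles of $\Gamma$ and $\Gamma^{-1}$ swapped), $\tilde P$ has the e--property and a unique invariant measure $\tilde\mu$, so $\tfrac1n\sum_{k=1}^n\tilde P^k\delta_z\to\tilde\mu$ weakly for every $z$. Now pick a gap $(a,b)\subset\mathbf S^1\setminus\supp\tilde\mu$ and set $S_0=\bigcup_{\mathbf i\in\Sigma_*}g_{\mathbf i}((a,b))$: this is an open, dense, $\Gamma$--forward--invariant set with $\tilde\mu(S_0)=0$, and every ergodic $P$--invariant measure gives it full mass. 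For any interval $I\subset S_0$ one has $\tfrac1n\sum_{k=1}^n\tilde U^k\mathbf 1_I(z)\to 0$; unravelling this (it equals the Ces\`aro--averaged probability that the forward image $g_{i_k,\ldots,i_1}(I)$ contains $z$) shows that for $x,y\in I$ the intervals $g_{\mathbf i}([x,y])$ are, in Ces\`aro average, short with probability close to one. This yields the \emph{Ces\`aro} e--property of $P$ at every point of $S_0$, and Proposition~\ref{prop1} (Ces\`aro version) then gives uniqueness. In short, the paper turns the failure of full support of $\tilde\mu$ into a contraction mechanism for $P$, rather than trying to exclude it.
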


\begin{proof} The iterated function system $(\Gamma^{-1}, {\bf p})$ satisfies the e--property. Denote by $\tilde P$ and $\tilde U$ the Markov operator and the dual operator corresponding to $(\Gamma^{-1}, {\bf p})$, respectively. From the proof of the previous proposition it follows that the hypothesis holds provided the unique invariant measure $\tilde\mu$ for $(\Gamma^{-1}, {\bf p})$ satisfies the condition $\supp\tilde\mu=\mathbf S^1$.

Now assume that $\mathbf S^1\setminus \supp\tilde\mu\neq\emptyset$. Let $(a, b)\subset\mathbf S^1\setminus \supp\tilde\mu$. 
Set 
$$
S_0=\bigcup_{n=1}^{\infty}\bigcup_{(i_1,\ldots, i_n)\in \Sigma_n} g_{i_1,\ldots, i_n}((a, b))
$$
and observe that $S_0$ is open and dense in $\mathbf S^1$, by the minimal action of $\Gamma$. Let $\mu_*\in\mathcal M_1(\mathbf S^1)$ be an ergodic invariant measure for $(\Gamma, p)$. Since $\supp\mu_*=\mathbf S^1$ and $g_i(S_0)\subset S_0$ for any $i=1, \ldots, k$ we have $\mu_*(S_0)>0$ and $U{\bf 1}_{S_0}={\bf 1}_{S_0}$. Thus $\mu_*(S_0)=1$, by ergodicity of $\mu_*$.
We are going to apply Proposition \ref{prop1} therefore we have to check that the Ces\'aro e-property holds at any $x\in S_0$. To do this fix $x\in S_0$ and $\varepsilon>0$.  Let $I\subset S_0$ be an open neighbourhood of $x$. Let $f: \mathbf S^1\to\mathbb R$ be a Lipschitz function with the Lipschitz constant $L$. Choose a finite set $\{x_0, \ldots, x_N\}\subset \mathbf S^1$ such that $|[x_{i-1}, x_i]|<\varepsilon/L$. Since $\frac{1}{n}\sum_{k=1}^n \tilde P^k\delta_x$ converges weakly to $\tilde\mu$ for any $x\in\mathbf {S}^1$ and $\tilde\mu(I)=0$ we have
$$
\frac{1}{n}\sum_{k=1}^n \tilde U^k{\bf 1}_I (x)\to 0\qquad\text{as $n\to\infty$.}
$$
On the other hand, we have
$$
\begin{aligned}
\frac{1}{n}\sum_{k=1}^n \tilde U^k{\bf 1}_I (x)&=\frac{1}{n}\sum_{k=1}^n\sum_{(i_1,\ldots, i_k)\in\Sigma_k} p_{i_1}\cdots p_{i_k} {\bf 1}_I (g_{i_1}^{-1}\circ\cdots\circ g_{i_k}^{-1}(x))\\
&=\frac{1}{n}\sum_{k=1}^n\sum_{(i_1,\ldots, i_k)\in\Sigma_k} p_{i_1}\cdots p_{i_k} {\bf 1}_{g_{i_1,\ldots, i_k} (I)} (x)\\
&=\frac{1}{n}\sum_{k=1}^n\sum_{(i_1,\ldots, i_k)\in\Sigma_k} p_{i_1}\cdots p_{i_k} {\bf 1}_{\{x\}} (g_{i_1,\ldots, i_k} (I)).
\end{aligned}
$$
Consequently, we have
$$
\frac{1}{n}\sum_{k=1}^n\sum_{(i_1,\ldots, i_k)\in\Sigma_k} p_{i_1}\cdots p_{i_k} {\bf 1}_{\{x_0,\ldots, x_M\}} (g_{i_1,\ldots, i_k} (I))\to 0\qquad\text{as $n\to\infty$.}
$$
Thus, for any $x, y\in I$, the interval $g_{i_1\cdots i_k}([x, y])$ typically will be located between some points $x_{i-1}$ and $x_i$, so that its length will be less than $\varepsilon/L$. Hence
$$
\begin{aligned}
&\limsup_{n\to\infty}\left|\frac{1}{n}\sum_{k=1}^n U^k f(x)-\frac{1}{n}\sum_{k=1}^n U^k f(y)\right|\\
&\le\limsup_{n\to\infty}\frac{1}{n}\sum_{k=1}^n\sum_{(i_1,\ldots, i_k)\in\Sigma_k} p_{i_1}\cdots p_{i_k} |f (g_{i_1}\circ\cdots\circ g_{i_k}(x))-f (g_{i_1}\circ\cdots\circ g_{i_k}(y))|\\
&\le \limsup_{n\to\infty} \frac{L}{n}\sum_{k=1}^n\sum_{(i_1,\ldots, i_k)\in\Sigma_k} p_{i_1}\cdots p_{i_k} |g_{i_1,\ldots, i_k}([x, y])|\le L\varepsilon/L=\varepsilon.
\end{aligned}
$$
Since $\varepsilon>0$ was arbitrary, the operator $P$ satisfies the Ces\'aro e--property. This completes the proof.
\end{proof}
 
\section{Central Limit Theorem}

Let $\Gamma=\{g_1,\ldots, g_k\}$ be a family of homeomorphisms on $\mathbf S^1$ and let ${\bf p}=(p_1,\dots p_k)$ be a probability vector. Let $\mu_*\in\mathcal M_1(\mathbf S^1)$ be an  invariant measure for the iterated function system $(\Gamma, {\bf p})$. By $(X_n)_{n\ge 0}$ we shall denote the stationary Markov chain corresponding to the iterated function system $(\Gamma, {\bf p})$. Let $\varphi: \mathbf S^1\to\mathbb R$ be a H\"older continuous function satisfying $\int_{\mathbf S^1} \varphi \d\mu_*=0$. Set 
$$
S_n:=
S_n(\varphi)=\varphi(X_0)+\ldots+\varphi(X_n)
$$
and
$$
S^*_n=\frac{1}{\sqrt{n}}S_n\qquad\text{for $n\ge 1$.}
$$
Our main purpose  in this section is to prove that $S_n^*$ is asymptotically normal (the {\bf CLT} theorem). Maxwell and Woodroofe in \cite{MW} studied  general Markov chains and formulated a simple sufficient condition for the {\bf CLT} which in our case takes the form
\begin{equation}\label{eq:1_6/03.17}
\sum_{n=1}^{\infty} n^{-3/2} \|\sum_{k=1}^n U^k\varphi\|_{L^2(\mu_*)}<\infty,
\end{equation}
where $\|\cdot\|_{L^2(\mu_*)}$ denotes the $L^2(\mu_*)$ norm.
More precisely, the result proved in \cite{MW} says that if  \eqref{eq:1_6/03.17} holds, then the limit
$\sigma^2=\lim_{n\to\infty} E(S_n^{*2})$ exists and is finite, and then the distribution of $S_n^*$ tends to  $\mathcal N(0,\sigma)$.
\vskip5mm

We start with recalling some properties of iterated function systems obtained by D. Malicet (see Theorem A and Corollary 2.6 in \cite{malicet}):  

\begin{prop}\label{Malicet}  Let $\Gamma=\{g_1,\ldots, g_k\}$ be a familiy of homeomorphisms on $\mathbf{S}^1$ such that there is no measure invariant by $\Gamma$. Let  ${\bf p}=(p_1,\dots p_k)$ be a probability vector. If $\Gamma$ acts minimally, then there exists $q\in (0, 1)$ such that:
\begin{itemize}
\item for every $x\in\mathbf{S}^1$ there exists an open neighbourhood $I$ of $x$ and $\tilde\Sigma\subset\Sigma$ with $\mathbb P(\tilde\Sigma)>0$ such that for ${\bf i}=(i_1, i_2, \ldots)\in\tilde\Sigma$ we have
$$
|g_{i_n,\ldots, i_1}(I)|\le q^n;
$$
\item (asymptotic stability) for any $x\in \mathbf{S}^1$ the sequence $(P^n\delta_x)_{n\ge 1}$, where $P$ is the Markov operator corresponding to $(\Gamma, p)$, converges in the $*$--weak topology to the unique invariant measure $\mu_*$.
\end{itemize}
\end{prop}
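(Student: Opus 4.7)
The plan is to derive both parts of the proposition from a single synchronization lemma for two orbits driven by the same random symbolic sequence. Concretely, I aim to show that for $\mathbb P$-almost every $\mathbf i = (i_1, i_2, \ldots) \in \Sigma$ and every $x, y \in \mathbf S^1$,
$$
d\bigl(g_{i_n,\ldots,i_1}(x),\, g_{i_n,\ldots,i_1}(y)\bigr) \xrightarrow[n\to\infty]{} 0.
$$
From this pointwise synchronization, together with compactness of $\mathbf S^1$ and the fact that every $g_i$ maps arcs to arcs, I expect to extract both the uniform local exponential contraction of the first bullet and the asymptotic stability of the second.

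To prove synchronization, I would lift the dynamics to $\mathbf S^1 \times \mathbf S^1$ via the diagonal action $(x, y) \mapsto (g_i(x), g_i(y))$ and consider the associated Feller operator $\widehat P$. The key claim is that every $\widehat P$-invariant probability measure sits on the diagonal $\Delta = \{(x, x) : x \in \mathbf S^1\}$. If some invariant $\widehat\mu$ charged $(\mathbf S^1 \times \mathbf S^1) \setminus \Delta$, I would exploit the cyclic order to split each off-diagonal pair into two arcs and isolate a $\Gamma$-equivariant family of arcs of minimal length; pushing its endpoint distribution forward would produce a Borel probability on $\mathbf S^1$ invariant under every $g_i$, contradicting the hypothesis that no measure is invariant by $\Gamma$. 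This is structurally parallel to the argument used with the family $\mathcal H$ in the proof of the previous proposition. Once off-diagonal $\widehat P$-invariant measures are ruled out, Ces\'aro averaging $n^{-1}\sum_{k=1}^n \widehat P^k \delta_{(x, y)}$ forces every weak$^*$ limit to concentrate on $\Delta$, and a monotonicity argument on the arc length of $[g_{i_n,\ldots,i_1}(x), g_{i_n,\ldots,i_1}(y)]$ upgrades the Ces\'aro statement to almost sure convergence.

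Given synchronization, the first bullet follows by a stopping-time argument: for a small interval $I$ around $x$, define $\tau(\mathbf i) = \inf\{n : |g_{i_n,\ldots,i_1}(I)| \le |I|/2\}$; synchronization together with continuity makes $\tau$ finite almost surely, and a strong Markov/renewal argument with respect to the shift on $(\Sigma, \mathbb P)$ upgrades this to exponential shrinkage $|g_{i_n,\ldots,i_1}(I)| \le q^n$ on some $\tilde\Sigma \subset \Sigma$ of positive measure, for a uniform $q \in (0, 1)$. For the second bullet, let $\mu_*$ be any invariant measure of $P$ supplied by Krylov--Bogolyubov. For any Lipschitz $f \in C(\mathbf S^1)$ and any $x \in \mathbf S^1$, couple the chain started at $x$ with one started under $\mu_*$ via the same symbolic driving; synchronization gives $|U^n f(x) - \langle f, \mu_*\rangle| \to 0$, hence $P^n \delta_x \to \mu_*$ weakly and $\mu_*$ is unique.

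The main obstacle is the synchronization step. Without regularity beyond continuity there is no Lyapunov exponent to invoke, so the argument must rest entirely on the topological and measure-theoretic features of the circle --- cyclic order, and the way homeomorphisms act on arcs --- to force any off-diagonal $\widehat P$-invariant measure to descend to a common $\Gamma$-invariant measure on $\mathbf S^1$. Making this descent precise, and then promoting it from a Ces\'aro statement to almost sure convergence of the distance process, is the genuinely delicate content of Malicet's theorem.
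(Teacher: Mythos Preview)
The paper does not prove this proposition at all: it is stated as a quotation of Malicet's results (Theorem~A and Corollary~2.6 in \cite{malicet}), introduced with ``We start with recalling some properties of iterated function systems obtained by D.~Malicet.'' So there is no proof in the paper to compare against; you are attempting something the authors deliberately outsourced.

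Your sketch is a reasonable outline of the classical synchronization route (in the spirit of Antonov and later Kaimanovich, Deroin--Kleptsyn--Navas), and you are right that the crux is showing that every $\widehat P$-invariant measure on $\mathbf S^1\times\mathbf S^1$ concentrates on the diagonal. But two points deserve flagging. First, your descent argument (``isolate a $\Gamma$-equivariant family of arcs of minimal length and push the endpoint distribution forward'') is not a proof as written: an off-diagonal $\widehat P$-invariant measure need not be supported on a discrete set of arcs, so there is no finite family on which to run the minimal-length trick from the earlier proposition; one has to work with the distribution of arc lengths instead and use the martingale/harmonic-function structure. Second, your passage from almost-sure synchronization to the first bullet is where the real difficulty hides. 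The stopping-time/renewal step requires a \emph{uniform} halving estimate --- an $N$, $\varepsilon>0$ and $\delta>0$ such that every arc of length $<\delta$ is halved after $N$ steps with probability $\ge\varepsilon$ --- and pointwise synchronization alone does not give this, because after each renewal the image arc lands at an uncontrolled location on the circle. Obtaining a single $q\in(0,1)$ valid for every $x$ is precisely the content that makes Malicet's argument (which does use a generalized Lyapunov-exponent idea, contrary to your hope of avoiding one) nontrivial. You essentially concede this in your last paragraph, so the proposal is best read as an honest plan rather than a proof.
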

First, let us note that  Proposition ~\ref{Malicet} implies the e-property:
\begin{prop}\label{Stability_1} Under the hypothesis of Proposition \ref{Malicet} the operator $P$ corresponding to $(\Gamma, p)$ satisfies the e--property. Moreover, for any open interval $I\subset\mathbf S^1$ there exists $m\in\mathbb N$ such that
$$
\inf_{x\in\mathbf S^1} P^m\delta_x(I)>0.
$$
\end{prop}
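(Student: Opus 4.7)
The plan is to deduce both claims from the two parts of Proposition~\ref{Malicet}: the e--property comes mainly from the local contraction (the first part), while the uniform--positivity statement follows by combining the e--property with asymptotic stability (the second part).

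For the e--property, fix a Lipschitz observable $f$ with $L=\Lip f$ and $\|f\|_\infty\le M$, a point $x_0\in\mathbf{S}^1$, and $\varepsilon>0$. The basic estimate is
$$
|U^n f(x_0)-U^n f(y)|\le\mathbb{E}\bigl[\min\bigl(L\cdot|g_{\omega|_n}([x_0,y])|,\,2M\bigr)\bigr],
$$
where $\omega=(i_1,i_2,\dots)$ is $\mathbb{P}$--distributed and $g_{\omega|_n}=g_{i_n}\circ\cdots\circ g_{i_1}$, so it suffices to show that $\sup_n\mathbb{E}[\min(L|J_n|,2M)]\to 0$ as $y\to x_0$, where $J_n:=g_{\omega|_n}([x_0,y])$. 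Compactness of $\mathbf{S}^1$ lets me extract from the first part of Proposition~\ref{Malicet} a finite cover $V_{z_1},\dots,V_{z_N}$ with Lebesgue number $\lambda>0$ and sets $\tilde\Sigma_j\subset\Sigma$ of measure $\ge p_0>0$ on which $|g_{\omega|_n}(V_{z_j})|\le q^n$. For any prescribed $\varepsilon'>0$, choosing $n_0$ with $q^{n_0}<\varepsilon'$ and using that $\mathbb{P}(\tilde\Sigma_j)>0$ forces some length--$n_0$ cylinder to meet $\tilde\Sigma_j$, I obtain a word $w_j$ of length $n_0$ with $p_{w_j}\ge p_*>0$ and $|g_{w_j}(V_{z_j})|\le\varepsilon'$. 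Hence for any interval $J$ with $|J|\le\lambda$ (which by the Lebesgue property sits in some $V_{z_{j(J)}}$), one has $|g_{w_{j(J)}}(J)|\le\varepsilon'$ with probability at least $p_*$.

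The main obstacle is to propagate this one--shot contraction into a uniform--in--$n$ bound on $\mathbb{E}[\min(L|J_n|,2M)]$. I plan to handle this by a stopping--time/recurrence argument: shift--invariance of $\mathbb{P}$ permits iterating the above contraction, while asymptotic stability, which forces the orbit $Y_n^{x_0}:=g_{\omega|_n}(x_0)$ to equidistribute under $\mu_*$, keeps $|J_n|$ below $\lambda$ most of the time. Combining these, one shows that if $|J_0|$ is chosen small enough then $\mathbb{P}(\sup_n|J_n|>\varepsilon')\le\eta$ for any prescribed $\eta$. Together with the Feller continuity of each individual $U^n f$ (which handles the finitely many "early" iterates), this delivers the e--property.

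With the e--property in hand, the second assertion is short. By minimality of $\Gamma$ the unique invariant measure $\mu_*$ has full support, so $\mu_*(I)>0$. Pick a Lipschitz $\phi\colon\mathbf{S}^1\to[0,1]$ supported in $I$ with $\langle\phi,\mu_*\rangle>0$. Asymptotic stability gives the pointwise convergence $U^n\phi(x)\to\langle\phi,\mu_*\rangle$, while the e--property gives equicontinuity of $\{U^n\phi\}$; with uniform boundedness on the compact space $\mathbf{S}^1$, Arzelà--Ascoli upgrades the convergence to uniform. Choosing $m$ so large that $U^m\phi(x)\ge\tfrac12\langle\phi,\mu_*\rangle$ for every $x$, and using $\phi\le\mathbf{1}_I$, yields $P^m\delta_x(I)=U^m\mathbf{1}_I(x)\ge\tfrac12\langle\phi,\mu_*\rangle>0$ uniformly in $x$, as required.
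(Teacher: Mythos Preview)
Your argument for the second assertion is correct and essentially matches the paper's: both combine asymptotic stability (pointwise convergence $U^n\phi(x)\to\langle\phi,\mu_*\rangle$) with equicontinuity to upgrade to uniform convergence, then use compactness. Your use of a Lipschitz bump $\phi\le\mathbf 1_I$ is in fact slightly cleaner than applying the e--property directly to the indicator.

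The gap is in your derivation of the e--property. You correctly identify the ``main obstacle'' but do not overcome it. The step
\[
\mathbb{P}\Bigl(\sup_n|J_n|>\varepsilon'\Bigr)\le\eta\quad\text{for }|J_0|\text{ small enough}
\]
is asserted, not proved, and the justification offered does not work: asymptotic stability tells you that the \emph{endpoint} $g_{\omega|_n}(x_0)$ equidistributes according to $\mu_*$, but says nothing about the \emph{length} $|J_n|$ of the image interval. A single homeomorphism can equidistribute a point while stretching a neighbouring interval, so ``$|J_n|\le\lambda$ most of the time'' simply does not follow. Moreover, your one--shot contraction only applies when $|J_n|\le\lambda$; once the interval exceeds $\lambda$ you have no mechanism to bring it back, so the proposed stopping--time iteration cannot close. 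In effect the claim you are trying to establish here \emph{is} the e--property (uniform control of $|U^nf(x_0)-U^nf(y)|$ as $y\to x_0$), so the argument is circular.

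The paper avoids this difficulty entirely: it does not attempt a direct proof from the local contraction, but instead invokes an abstract result (Theorem~4.8 in \cite{sander}) stating that any asymptotically stable Feller operator whose invariant measure has support with nonempty interior automatically satisfies the e--property. Since $\supp\mu_*=\mathbf S^1$ under minimality, the hypothesis is met and the e--property follows in one line. If you want a self--contained route, you would need either to reproduce that abstract theorem or to use a genuinely different device (for instance, the metric $\chi$ built from an invariant measure of the inverse system, as in Section~\ref{section:ergodicity}); the local contraction bullet of Proposition~\ref{Malicet} alone is not enough as you have used it.
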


\begin{proof} From Proposition \ref{Malicet} it follows that $P$ is asymptotically stable. Since $\supp\mu_*={\mathbf S}^1$, in particular $\text{Int}_{{\mathbf S}^1} \supp\mu_*\neq\emptyset$, from Theorem 4.8 in \cite{sander} we obtain that
$P$ satisfies the e--property.

Now fix an open interval $I\subset\mathbf S^1$. Since the operator $P$ corresponding to $(\Gamma, {\bf p})$ is asymptotically stable and it satisfies the e--property, for any $x\in{\mathbf S}^1$ there exists $N_x\in\mathbb N$ such that
\begin{equation}\label{e1_19.03.17}
P^n\delta_x(I)>\mu_*(I)/2>0 
\end{equation}
for all $n\ge N_x$. On the other hand, from the e--property it follows that for every $x\in {\mathbf S}^1$ we may choose some neighbourhood $U_x$ of $x$ such that the above property will be satisfied if we replace $x$ with an arbitrary $y$ from $U_x$. By compactness of ${\mathbf S}^1$ we may find $x_1, \ldots, x_k\in {\mathbf S}^1$ such that ${\mathbf S}^1=\bigcup_{i=1}^k U_{x_i}$. Then
for any $m\ge\max\{N_{x_1}, \ldots, N_{x_k}\}$ we have
$$
\inf_{x\in\mathbf S^1} P^m\delta_x(I)>\mu_*(I)/2>0
$$
and the proof is completed.
\end{proof}


We are going to introduce the following notation: for $\varphi: {\mathbf S}^{1}\to\mathbb R$, $x\in\mathbf S^1$, $\omega=(i_1,i_2,\dots )\in\Sigma$ or $\omega=(i_1,\dots i_r)\in \Sigma_r$, $r\ge n$, by  $S_n\varphi(\omega,x)$ we shall denote the sum:
$$
S_n\varphi(\omega,x)=\sum_{l=1}^{n}\varphi(g_{i_l,\dots, i_1}(x)).
$$

\begin{lem}\label{prop:pairing} Let $\Gamma=\{g_1,\ldots, g_k\}$ act minimally and there is no measure invariant by $\Gamma$. Assume that $p_1=\cdots=p_k=1/k$ and let $\varphi: {\mathbf S}^{1}\to\mathbb R$ be an arbitrary Lipschitz continous function with Lipschitz constant $1$.
Then there exist $\alpha\in (0, 1), m\in\mathbb N$ and $\gamma>0$ such that for an arbitrary pair of points   $x,y\in \mathbf S^1$ there exist two  collections of measurable pairwise  disjoint sets
$P_1,P_2\ldots\subset \Sigma$  and $P_1',P_2',\ldots\subset \Sigma$ such that: for every $l\ge 1$ 
$$\Sigma=P_1\cup P_2\cup\dots\cup P_l\cup B_l, \quad \quad \Sigma=P_1'\cup P_2'\cup\dots\cup P_l'\cup B_l',
$$ 
where the sets $B_l, B_l'$ are unions (possibly infinite) of some disjoint cylinders and a (measure preserving) bijection $b_l:\Sigma\to\Sigma$ satisfying
\begin{itemize}
\item[{\bf (P1)}] $b_l(P_j)=P_j'$ for every $j=1,\dots l$;
\item[{\bf (P2)}]${\mathbb P}(B_l)\le (1-\alpha)^l$;
\item[{\bf (P3)}] for every $n\ge 0$ and $\omega\in P_j$, $j=1,\ldots l$, we have
$$
|S_n\varphi(\omega,x)-S_n\varphi(b_l(\omega), y)|\le j\left (2(m+1)||\varphi||_\infty+\gamma\right);
$$
\item[{\bf (P4)}] for every cylinder set $C$ from the collection of cylinders forming $B_l$,  $n\le s$, where $s$ is the length of the cylinder $C$, we have
$$|S_n\varphi(\omega,x)-S_n\varphi(b_l(\omega), y)|\le j\left (2(m+1)||\varphi||_\infty+\gamma\right )\quad \omega\in C;
$$ 
\item[{\bf (P5)}] the bijection $b_{l+1}$ coincides with $b_l$ on the set $P_1\cup\dots\cup P_l$.
\end{itemize}
\end{lem}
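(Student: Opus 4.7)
The plan is to build the coupling iteratively, as a sequence of ``attempts'', each of length $m$ at the bijection level, where every attempt has success probability at least $\alpha>0$ uniformly in the current state. Success brings both orbits into a common Malicet-contractible neighbourhood $J$ and keeps them close thereafter, contributing only a geometric-tail cost; failure costs one bounded block plus a bounded wait. Thus after $j$ attempts both (P3) and (P4) reduce to totalling $j$ bounded-cost blocks, while (P2) is the usual Bernoulli bound. For parameters: by Proposition~\ref{Malicet} fix a Malicet neighbourhood $J=I_{z_0}$ together with its closed positive-measure contraction event $\tilde\Sigma_J=\bigcap_{n\ge 1}\{\omega:\,|g_{i_n,\ldots,i_1}(J)|\le q^n\}$ (a countable intersection of clopen sets); by Proposition~\ref{Stability_1} choose $m\in\mathbb N$ with $c:=\inf_{x\in\mathbf S^1}P^m\delta_x(J)>0$, and set $\alpha:=c\cdot\mathbb P(\tilde\Sigma_J)>0$, $\gamma:=q/(1-q)$.

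\textbf{Single attempt.} Suppose $C\subset\Sigma$ is a cylinder of length $s$ on which the already-constructed coupling brings the orbits to a definite state $(x',y')$. I extend the coupling on $C$ by permuting coordinates $s+1,\ldots,s+m$ via a bijection $b^*$ of $\{1,\ldots,k\}^m$. Let $E_{x'}=\{g_{i_m,\ldots,i_1}(x')\in J\}$ and $E_{y'}=\{g_{i_m,\ldots,i_1}(y')\in J\}$, both cylindrical events of length $m$ with $\mathbb P$-mass $\ge c$. Since $p_i=1/k$ makes $\mathbb P$ uniform on $\{1,\ldots,k\}^m$, every permutation is measure-preserving, so $b^*$ may be chosen with $b^*(E_{x'})\subset E_{y'}$ or $b^*(E_{x'})\supset E_{y'}$ according to whichever of $|E_{x'}|,|E_{y'}|$ is smaller; either way $\mathbb P(E_{x'}\cap(b^*)^{-1}(E_{y'}))\ge\min(\mathbb P(E_{x'}),\mathbb P(E_{y'}))\ge c$. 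Define
\[
P^{(C)}:=C\cap(E_{x'}\cap(b^*)^{-1}(E_{y'}))\cap T_C,\qquad T_C:=\{(\omega_{s+m+1},\omega_{s+m+2},\ldots)\in\tilde\Sigma_J\}.
\]
By independence of the $m$-block from the post-attempt tail, $\mathbb P(P^{(C)}\mid C)\ge\alpha$. For $\omega\in P^{(C)}$ both orbits lie in $J$ at time $s+m$ and, because $b^*$ is the identity on coordinates beyond $s+m$, they share the same tail from then on; by $T_C$ this tail contracts $J$ exponentially, so $d(X_n,Y_n)\le q^{n-s-m}$ for $n\ge s+m$.

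\textbf{Iteration and accounting.} Each $P^{(C)}$ is closed in $\Sigma$, so $B_l:=\Sigma\setminus(P_1\cup\dots\cup P_l)$ is open and decomposes as a countable disjoint union of cylinders; on each such cylinder the orbit state at the endpoint is determined and the single-attempt construction applies, defining $P_{l+1}$, $b_{l+1}$ on $B_l$ and $B_{l+1}$, while $b_{l+1}=b_l$ outside $B_l$. Properties (P1), (P5) and the geometric bound $\mathbb P(B_l)\le(1-\alpha)^l$, namely (P2), follow at once. For (P3) and (P4) one totals: each failed attempt uses distinct sequences on its $m$-block (cost $\le 2m\|\varphi\|_\infty$), is followed by a contracting tail up to the first failure (cost $\le\gamma$), and ends with one step of contraction failure (cost $\le 2\|\varphi\|_\infty$); this gives $\le 2(m+1)\|\varphi\|_\infty+\gamma$ per failed attempt, while the $j$-th, successful, attempt contributes $\le 2m\|\varphi\|_\infty+\gamma$. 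Summing $j$ attempts yields the required bound $j(2(m+1)\|\varphi\|_\infty+\gamma)$, valid for every $n\ge 0$ in (P3) and for every $n\le\length(C)$ in (P4).

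\textbf{Main obstacle.} The critical step is the single-attempt construction: producing, uniformly in $(x',y')$, a measure-preserving bijection $b^*$ with $\mathbb P(E_{x'}\cap(b^*)^{-1}(E_{y'}))\ge c$. This is where the hypothesis $p_1=\cdots=p_k=1/k$ is used essentially; it renders every permutation of $\{1,\ldots,k\}^m$ measure-preserving and thereby allows the unrestricted matching of the two positional success events that drives the whole coupling scheme.
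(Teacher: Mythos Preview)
Your proof is correct and follows essentially the same approach as the paper: both construct the coupling inductively by, at each stage, applying a measure-preserving bijection on an $m$-block of coordinates (this is exactly where the hypothesis $p_i=1/k$ is used) to match the events of landing in a fixed Malicet-contractible interval, then continuing with a common tail drawn from the contraction set $\tilde\Sigma$; the complement is a union of cylinders on which the procedure restarts. The paper takes $\gamma=(1-q)^{-1}$ rather than your $q/(1-q)$, and phrases the block-matching as choosing equal-cardinality subfamilies $\mathcal G,\mathcal G'\subset\Sigma_m$ rather than a permutation of $\{1,\dots,k\}^m$, but these are cosmetic; the cost decomposition per attempt and the resulting bounds for (P2)--(P5) are identical.
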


\begin{proof} By Proposition \ref{Malicet} there exist an open interval $I\subset \mathbf {S}^1$, $q\in (0, 1)$ such that the set  $\tilde\Sigma$ of all sequences $\omega\in\Sigma$ satisfying the following condition:
\begin{equation}\label{pesin}
|g_{i_n, i_{n-1},\ldots, i_1}(z)-g_{i_n, i_{n-1},\ldots, i_1}(w)|\le q^n\quad\quad \text{for all~} n\ge 0, \quad\text{all}\quad z,w\in I
\end{equation} 
has positive $\mathbb P$- measure.
Further, from Proposition \ref{Stability_1} it follows that there exists $m\in\mathbb N$ such that
$$
\beta:=\inf_{x\in \mathbf {S}^1}P^m\delta_x(I)>0.
$$

Set
$$
\alpha:=\beta\mathbb P (\tilde\Sigma)\quad\text{and}\quad \gamma:=(1-q)^{-1}.
$$

Fix $x, y\in{\mathbf S}^1$. Then there exist a collection $\mathcal G\subset \Sigma_m$  of sequences  ${\bf i}=(i_1, \ldots, i_m)$ such that $g_{i_m, i_{m-1},\ldots, i_1}(x)\in I$, and, similarly, a collection $\mathcal G'$ of sequences ${\bf i'}=(i_1',\dots i_m')$ such that $g_{i'_m, i'_{m-1},\ldots, i'_1}(y)\in I$.  We may also assume that $G:=\#(\mathcal G)=\#(\mathcal G')>k^m\beta$. Fix an arbitrary bijection $${\bf i}\mapsto {\bf i'}$$ between sequences in $\mathcal G$ and $\mathcal G'$, respectively. 

Put  
$$
P_1=\{{\bf i}{\bf j}:{{\bf i}\in {\mathcal G},\bf j}\in \tilde \Sigma\}\quad \text{ and }\quad
P_1'=\{{\bf i'}{\bf j}:{{\bf i'}\in {\mathcal G}',  \bf j}\in \tilde \Sigma\}.
$$
Obviously $\mathbb P(P_1)\ge\alpha$.
The bijection $b_1:P_1\to P_1'$ is defined simply  by 
\begin{equation}\label{bijection}
b_1({\bf i}{\bf j})={\bf i'}{\bf j}\qquad\text{for every ${\bf j}\in \tilde \Sigma$.}
\end{equation}

Set $B_1=\Sigma\setminus P_1$ and observe that $\mathbb P(B_1)\le 1-\alpha$ and {\bf (P2)} is satisfied. Now we  define the bijection  $b_1$ on $B_1$. 
It is done in two steps: first, there are $k^m-G$  finite sequences $\bf l$  and ${\bf l'}$  of length $m$ outside $\mathcal G$ and $\mathcal G'$, respectively. Choose an arbitrary bijection  between them and define the bijection $b_1$ on the  
cylinder set defined by ${\bf l}$ by
\begin{equation}\label{bijection2}
b_1({\bf l}{\bf j})={\bf l'}{\bf j}
\end{equation}
for every ${\bf j}\in\Sigma$. Hence {\bf (P1)} holds.

Further from the definition of the set $\tilde\Sigma$ it follows that the complement $\Sigma\setminus\tilde\Sigma$ is a union, possibly infinite, of some disjoint cylinder sets, say
$$
\Sigma\setminus\tilde\Sigma=\bigcup_{{\bf k}\in K}C_{\bf k}.
$$
Also, let us note that for ${\bf k}\in K$ we still have the estimate
 \begin{equation}\label{pesin2}
|g_{ i_{n},\ldots, i_1}(z)-g_{ i_{n},\ldots, i_1}(w)|\le q^n \quad\text{for}\quad z,w\in I\,\,\text{and}\,\, n<r,
\end{equation}
where $r$ is the length of $\bf k$.

It remains to define $b_1$ on the complement $C_{\bf i}\setminus P_1$, for each sequence ${\bf i}\in \mathcal G$.  Since $\Sigma\setminus\tilde\Sigma$ is a union of some 
collection of  disjoint cylinders $C_{\bf k}$, ${\bf k}\in K$  in $\Sigma$, the set $C_{\bf i}\setminus P_1$ is the union 
of  cylinder sets $C_{\bf i k}$, ${\bf k}\in K$ and, similarly, the set $C_{\bf i'}\setminus P_1'$ is the union of the 
cylinders  $C_{\bf i' k}$, ${\bf k}\in K$. This defines a natural measure preserving  bijection $C_{\bf ik}\to C_{\bf 
i'k}$. Thus the definition of $b_1: \Sigma\to\Sigma$ is completed.
\vskip2mm
We shall check that {\bf (P3)}  is satisfied.
First, for $\omega\in P_1$, $\omega={\bf ij}$ and $\omega'=b_1(\omega)$ we have, for $n\ge m$:
\begin{equation}\label{eq:p1}
\aligned
&|S_n\varphi(\omega,x)-S_n\varphi(\omega',y)|=\\
&|S_m\varphi({\bf i},x)-S_m\varphi({\bf i'},y)
+S_{n-m}\varphi({\bf j},g_{\bf i}(x))-S_{n-m}\varphi({\bf j},g_{\bf i'}(y))|\\
&\le 2m||\varphi||_\infty+\gamma.
\endaligned
\end{equation}
If $n\le m$, then \eqref{eq:p1} holds trivially and {\bf (P3)} holds.

Now, let $C$ be a cylinder from the collection forming  $B_1$. If $C$ is of length $m$ then  {\bf (P4)} is trivially satisfied. Now,  if $C$ is of the form $\bf i\bf k$, so that the length of $C$ is equal to $s=m+r> m$, then {\bf (P4)} still holds for $n\le s$. Indeed,  applying \eqref{pesin2} for $r$, and $z=g_{\bf i}(x)$, $w=g_{\bf i'}(y)$, gives
$$|S_{n-m}\varphi({\bf j},g_{\bf i}(x))-S_{n-m}\varphi({\bf j},g_{\bf i'}(y))|
\le 2||\varphi||_\infty +\gamma$$
for $n\le s$ (i.e., $n-m\le r$), so that

$$|S_n\varphi(\omega, x)-S_n\varphi(\omega',y)|\le 2m||\varphi||_\infty+2||\varphi||_\infty+\gamma =2(m+1)||\varphi||_\infty+\gamma,
$$
where $\omega'=b_1(\omega)$.

First step in our induction argument is done.

\

Next, assume that hypotheses hold for $1,\dots l$. We shall construct the set $P_{l+1}\subset B_l$, and put $B_{l+1}=B_l\setminus P_{l+1}$.
The construction goes as follows: Let $C=C_{(i_1,\dots i_r)}$ be a cylinder set form the collection forming $B_l$, and let $C'=b_l(C)=C_{(i_1',\dots i_r')}$.
Again, there exist collections, both of cardinality $G$   of  sequences of length m $(i_{r+1},\ldots, i_{r+m})$, $(i'_{r+1},\ldots, i'_{r+m})$ such that both 
$g_{i_{r+m},\ldots, i_{r+1}, i_r,\ldots,  i_1}(x)$
 and $g_{i'_{r+m},\ldots, i'_{r+1}, i'_r,\ldots,  i'_1}(y)$ are in $I$.  Choose an arbitrary bijection between them $(i_{r+1},\ldots, i_{r+m})\mapsto (i'_{r+1},\ldots, i'_{r+m})$. 
Put ${\bf i}=(i_1,\ldots,
i_{r+m})$ and ${\bf i'}=(i_1',\ldots, i_{r+m}')$ and consider the subsets of the  cylinder sets defined by $\bf i$ and $\bf 
i'$:
$$
\{{\bf i j}:{\bf j}\in\tilde\Sigma\}\quad\quad\text {and}\quad\quad  \{{\bf i' j}:{\bf j}\in\tilde\Sigma\},
$$ and the natural bijection $b_{l+1}$ between them: $\bf i\bf j\mapsto\bf i'\bf j$. Let $b_{l+1}(\omega)=b_{l}(\omega)$ for all $\omega\in\Sigma\setminus P_{l+1}$. Thus {\bf (P5)} holds. Obviously {\bf (P1)} is also satisfied.

 The set $P_{l+1}$ (respectively: $P_{l+1}'$) is   then defined as the union of all such sets  $(\bf i \bf j)$ constructed above, over all cylinder sets in $B_l$. 
  
 It follows from the structure of $\tilde\Sigma$  that $B_{l+1}=B_l\setminus P_{l+1}$ is, again , a union of some 
 cylinder sets. The construction gives also the estimate ${\mathbb P}(P_{l+1})\ge\alpha {\mathbb P}(B_l)$, so 
 ${\mathbb P}(B_{l+1})\le (1-\alpha)^{l+1}$ and {\bf (P2)} holds.
 
 Condition {\bf (P3)} now holds for $P_{l+1}$. Indeed, take one of cylinder sets   $C=C_{(i_1,\dots i_r)}$  forming the set $B_l$, and follow the above construction, i.e. extend the sequence to ${\bf i}=(i_1,i_2,\dots i_r,i_{r+1},\dots i_{r+m})$ and repeat the same procedure  for for $C'$. Take $\omega\in P_{l+1}$, $\omega={\bf i}{\bf j}$ and $\omega'=b_{l+1}(\omega)={\bf i'}{\bf j}$.
 Then, by inductive assumption, 
 $$|S_{r+m}({\bf i},x)-S_{r+m}({\bf i'}, y)|\le l(2(m+1)||\varphi||_\infty+\gamma)+2m||\varphi||_\infty.\,$$
 and for every $n>r+m$
 $$|S_n({\bf j},g_{\bf i}(x))-S_n({\bf j},g_{{\bf i'}}(y))|\le \gamma.$$
 Summing these two estimates we obtain {\bf (P3)} for $P_{l+1}$. Similarly we check that {\bf (P4)} holds. The proof is complete.
 \end{proof}
 
We may formulate the main result of our paper saying that the iterated function system under quite general assumptions fulfils the Central Limit Theorem.

\begin{thm}\label{thm:clt} Let $\varphi: \mathbf S^1\to\mathbb R$ be an arbitrary Lipschitz continuous function. If $\Gamma=\{g_1,\ldots, g_k\}$ acts minimally and there is no measure invariant by $\Gamma$, then for any probability vector ${\bf p}=(p_1,\dots p_k)$ the iterated function system $(\Gamma, {\bf p})$ satisfies the Central Limit Theorem for the function $\varphi$.
\end{thm}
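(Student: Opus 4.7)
The plan is to verify the Maxwell--Woodroofe condition \eqref{eq:1_6/03.17}. Since $\|\cdot\|_{L^2(\mu_*)}\le\|\cdot\|_\infty$, it is enough to prove the much stronger uniform bound $\sup_x\bigl|\sum_{k=1}^n U^k\varphi(x)\bigr|=O(1)$. By rescaling I may assume $\Lip\varphi\le 1$, matching the normalisation in Lemma~\ref{prop:pairing}. That lemma is stated only for uniform weights $p_i=1/k$, so a preliminary reduction is needed: when $p_i=q_i/N\in\mathbb Q$ I replace $\Gamma$ by the enlarged family $\widetilde\Gamma$ listing each $g_i$ with multiplicity $q_i$, equipped with the uniform probability $1/N$; this yields the same Markov operator, the same stationary chain, and inherits minimality together with the absence of a $\Gamma$-invariant measure. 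For general $\mathbf p$ the proof of Lemma~\ref{prop:pairing} goes through with the combinatorial bijection between the words $\mathcal G$ and $\mathcal G'$ of length $m$ replaced by a measure-preserving bijection between unions of cylinders of equal $\mathbb P$-measure, obtained by subdividing cylinders as needed.

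Writing $\sum_{k=1}^n U^k\varphi(x)=\mathbb E_\omega[S_n\varphi(\omega,x)]$ and using $\int\varphi\,d\mu_*=0$ together with the invariance of $\mu_*$ gives $\int\sum_{k=1}^n U^k\varphi\,d\mu_*=0$, so
$$
\sum_{k=1}^n U^k\varphi(x)=\int_{\mathbf S^1}\mathbb E_\omega\bigl[S_n\varphi(\omega,x)-S_n\varphi(\omega,y)\bigr]\,d\mu_*(y).
$$
As the bijection $b_l$ from Lemma~\ref{prop:pairing} preserves $\mathbb P$, the inner expectation equals $\mathbb E_\omega[S_n\varphi(\omega,x)-S_n\varphi(b_l(\omega),y)]$, and the whole task is reduced to a uniform bound on this last quantity.

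Fix $l\in\mathbb N$ and set $C:=2(m+1)\|\varphi\|_\infty+\gamma$. Applying (P2)--(P4) and splitting $B_l$ into cylinders of length at least $n$ (covered by (P4) with bound $lC$) and cylinders of length less than $n$ (controlled trivially by $2n\|\varphi\|_\infty$), together with $\mathbb P(P_j)\le(1-\alpha)^{j-1}$ and $\mathbb P(B_l)\le(1-\alpha)^l$, one arrives at
$$
\bigl|\mathbb E_\omega[S_n\varphi(\omega,x)-S_n\varphi(b_l(\omega),y)]\bigr|\le C\sum_{j=1}^\infty j(1-\alpha)^{j-1}+lC(1-\alpha)^l+2n\|\varphi\|_\infty(1-\alpha)^l.
$$
Choosing $l:=\lceil\log(2n\|\varphi\|_\infty)/|\log(1-\alpha)|\rceil$ makes the last two terms $O(1)$, while the first is the geometric constant $C/\alpha^2$. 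Hence $\sup_x|\sum_{k=1}^n U^k\varphi(x)|=O(1)$, condition \eqref{eq:1_6/03.17} follows from $\sum_n n^{-3/2}<\infty$, and the CLT is delivered by the Maxwell--Woodroofe theorem. The real difficulty is packaged inside Lemma~\ref{prop:pairing}; once that coupling is available the estimate above is essentially automatic, and the only additional subtle point is the extension of the pairing construction from equal weights to an arbitrary probability vector $\mathbf p$.
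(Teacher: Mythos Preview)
Your approach follows the paper's line: verify the Maxwell--Woodroofe condition \eqref{eq:1_6/03.17} via the coupling of Lemma~\ref{prop:pairing}, after reducing (for rational $\mathbf p$) to equal weights by duplicating maps. The core estimate, however, is sharper than the paper's. The paper fixes $\beta\in(0,1)$, decomposes $\Sigma=P_1\cup\dots\cup P_{[n^\beta]}\cup B_{[n^\beta]}$, bounds the contribution of every $P_j$ by the worst case $j=[n^\beta]$, and obtains only
\[
\Bigl|\sum_{k=1}^n U^k\varphi(x)-\sum_{k=1}^n U^k\varphi(y)\Bigr|\le Cn^\beta,
\]
then chooses $\beta<1/2$ to make the series converge. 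You instead use the full strength of \textbf{(P3)} together with $\mathbb P(P_j)\le(1-\alpha)^{j-1}$, which collapses the $P_j$-part to the constant $C/\alpha^2$; choosing $l\asymp\log n$ then renders the $B_l$-contribution bounded as well. The resulting uniform $O(1)$ bound is a genuine refinement and makes \eqref{eq:1_6/03.17} immediate. (The paper additionally derives the CLT for an arbitrary starting point from the same estimate; your argument would give that too.)

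The weak point is the extension from equal weights to a general probability vector $\mathbf p$. For rational $\mathbf p$ your duplication trick coincides with the paper's. For irrational $\mathbf p$ you assert that the combinatorial bijection in Lemma~\ref{prop:pairing} can be replaced by ``a measure-preserving bijection between unions of cylinders of equal $\mathbb P$-measure, obtained by subdividing cylinders as needed''. This is not yet a proof: with unequal weights the collections $\mathcal G$ and $\mathcal G'$ of length-$m$ words need not have the same $\mathbb P$-mass, and even after trimming to a common mass $\ge\beta$, a measure-preserving pairing will in general require cutting cylinders into pieces that are no longer cylinders, so the inductive mechanism (in particular \textbf{(P4)}, which is formulated for cylinder sets in $B_l$) has to be reworked. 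The paper takes a different route: it approximates $\mathbf p$ by rational vectors and argues that for each fixed $n$ the constants $m$, $q$, $\mu_*(I)$ and the finite-level mass $\mathbb P_n(\tilde\Sigma_n)$ are stable under small perturbations of $\mathbf p$, so the key estimate transfers. Either path can be made rigorous, but yours needs substantially more detail than the single sentence you devote to it.
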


\begin{proof} First we assume   that $\varphi$ is Lipschitz continuous; one can also assume that  the Lipschitz  constant of the function $\varphi$ is equal to $1$.  
\vskip2mm
At the first step of the proof we shall assume that all the probabilities $p_1,\dots p_k$ are equal, i.e., 
$p:=p_1=p_2=\dots =p_k=\frac{1}{k}.$ 
The general case will be deduced from this special case at the end of the proof.

\

Fix $n\in\mathbb N$. Observe that

\begin{equation}
\sum_{k=1}^nU^k\varphi(x)=p^n\sum_{\omega\in\Sigma_n}S_n\varphi(\omega,x)
\end{equation}
 
So, for $x,y\in \mathbb S^1$ we have
\begin{equation}\label{eq:diff}
\aligned
&|\sum_{k=1}^nU^k\varphi(x)-\sum_{k=1}^nU^k\varphi(y)|=|p^n\sum_{\omega\in\Sigma_n}S_n\varphi(\omega,x)-p^n\sum_{\omega\in\Sigma_n}S_n\varphi(\omega,y)|\\
\endaligned
\end{equation}

Let $\beta\in (0,1)$ and let $\Sigma=P_1\cup P_2\cup\dots \cup P_{[n^\beta]}\cup B_{[n^\beta]}$, where $P_1, P_2, \ldots, P_{[n^\beta]}$ and $B_{[n^\beta]}$ are given by Lemma \ref{prop:pairing}. Then

$$\sum_{\omega\in\Sigma}S_n\varphi(\omega, x)=\sum_{\omega\in P_1\cup\dots\cup P_{[n^\beta]}}S_n\varphi(\omega, x)+\sum_{\omega\in B_{[n^\beta]}}S_n\varphi(\omega, x)$$ 
and, similarly, we also have

$$
\sum_{\omega\in\Sigma}S_n\varphi(\omega, y)=\sum_{\omega\in P'_1\cup\dots\cup P'_{[n^\beta]}}S_n\varphi(\omega, y)+\sum_{\omega\in B'_{[n^\beta]}}S_n\varphi(\omega, y),
$$ 
where $P_i'=b_{[n^\beta]} (P_i)$ for $i=1, \ldots, [n^\beta]$ and $B'_{[n^\beta]}=b_{[n^\beta]}(B_{[n^\beta]})$.

We need to estimate  $p^n\left (\sum_{\omega\in\Sigma_n}(S_n\varphi(\omega,x)-S_n\varphi(\omega,y))\right )$. Using the bijecton $b_{[n^\beta]}$ and defining $b_{[n^\beta]}(\omega)=\omega'$  we have
$$
\aligned
&p^n\left (\sum_{\omega\in\Sigma_n}(S_n\varphi(\omega,x)-S_n\varphi(\omega,y))\right )
=p^n\left(\sum_{\omega\in P_1\cup\dots\cup P_{[n^\beta]}}S_n\varphi(\omega, x)-\sum_{\omega'\in P_1'\cup\dots\cup P'_{[n^\beta]}}S_n\varphi(\omega', y)\right)\\
&+p^n\sum_{\omega\in B_{[n^\beta}]}(S_n\varphi(x,\omega)-S_n\varphi(y,\omega'))
:=I+II.
\endaligned 
$$
By {\bf (P3)} and {\bf (P4)} in Lemma \ref{prop:pairing} we  can estimate the above summands:
$$|I|\le \mathbb P(P_1\cup\dots\cup P_{[n^\beta]})n^\beta (2(m+1)||\varphi||_\infty+\gamma)\le n^\beta(2(m+1)||\varphi||_\infty+\gamma),$$
$$|II|\le 2n||\varphi||_\infty\cdot \mathbb P(B_{[n^\beta]})\le 2n||\varphi||_\infty\cdot (1-\alpha)^{n^\beta}.
$$ 
Summarizing, we obtain the following estimate:
\begin{equation}\label{eq:diff2}
|\sum_{k=1}^n (U^k\varphi(x)-U^k\varphi(y))|\le C n^\beta,
\end{equation}
where $C$ is some constant depending on $m,\gamma, \beta, \alpha$ and $||\varphi||$. Therefore,
$$
\begin{aligned}
|\sum_{k=1}^n U^k\varphi(x)|&=|\sum_{k=1}^n U^k\varphi(x)-\int_{\mathbf{S}^1}\varphi(y)\mu_*(\d y)|=|\int_{\mathbf{S}^1}[\sum_{k=1}^n U^k\varphi(x)-U^k\varphi(y)]\mu_*(\d y)|\\
&\le  \int_{\mathbf{S}^1}|\sum_{k=1}^n (U^k\varphi(x)-U^k\varphi(y))|\mu_*(\d y)\le Cn^\beta.
\end{aligned}
$$
Clearly, this uniform estimate implies that 
$$||\sum_{k=1}^n U^k\varphi||_{L_2(\mu_*)}\le C n^\beta:=a_n.
$$
The above estimate can be performed for every $\beta\in (0,1)$. Choosing some $\beta\in (0,1/2)$, e.g., $\beta=1/4)$,
we see that the series
$
\sum_{n=1}^{\infty}\frac{a_n}{n^{3/2}}
$
is convergent. Thus, condition \eqref{eq:1_6/03.17} holds and the stationary sequence $(\varphi(X_n))_{n\ge 1}$ satisfies the CLT.

To show that the CLT theorem holds for a sequence $(\varphi(X^x_n))_{n\ge 1}$ starting at arbitrary 
$x\in\mathbf{S}^1$ it is enough to prove that 
\begin{equation}\label{eq:pointwise}
\left|\mathbb E \exp\left(it \frac{\varphi(X_1^x)+\ldots+\varphi(X_n^x)}{\sqrt{n}}\right)
-\mathbb E \exp\left(it \frac{\varphi(X_1)+\ldots+\varphi(X_n)}{\sqrt{n}}\right)\right|\to 0\quad\text{as $n\to\infty$.}
\end{equation}
or, in our notation, that the following difference

$$
\aligned
&\int _\Sigma\exp \left(it\frac{S_n\varphi(\omega,x)}{\sqrt n}\right)\mathbb P
(\d\omega)-\int_\Sigma
\int_{\mathbb S ^1}\exp \left(it 
\frac{S_n\varphi(\omega,y)}{\sqrt n}\right)\mu_*(\d y)\mathbb P(\d\omega)\\
&=\int _\Sigma\int_{\mathbb S^1}\left (\exp \left(it\frac{S_n\varphi(\omega,x)}{\sqrt n}\right)- \exp \left(it 
\frac{S_n\varphi(\omega,y)}{\sqrt n}\right)\right )\mu_*(\d y)\mathbb P(\d\omega)
\endaligned
$$ 
converges to $0$ as $n\to\infty$.

With $x$ and $y$ fixed, the expression in the  brackets can be estimated
by 
$$
\frac{p^n}{\sqrt n}\left|\sum_{\omega\in\Sigma_n}S_n\varphi(\omega,x)-\sum_{\omega\in\Sigma_n}S_n\varphi(\omega,y)\right|=\frac{1}{\sqrt n}\left|\sum_{k=1}^n(U^k\varphi(x)-U^k\varphi(y))\right|.
$$
Using \eqref{eq:diff2}, again for $\beta=1/4$, we conclude  that \eqref{eq:pointwise} holds.

Since 
$$
\mathbb E \exp\left(it \frac{\varphi(X_1)+\ldots+\varphi(X_n)}{\sqrt{n}}\right)\to \exp (-t^2\sigma^2/2)\qquad\text{ as $n\to\infty$},
$$ 
we obtain 
$$\mathbb E \exp\left(it \frac{\varphi(X_1^x)+\ldots+\varphi(X_n^x)}{\sqrt{n}}\right)\to\exp (-t^2\sigma^2/2)\qquad\text{ as $n\to\infty$}
$$
and we are done.

\

Now, assume that an arbitrary probability vector  ${\bf p}=(p_1,\dots p_k)$ is given. We shall deduce the CLT for this general case from the previous case of equal probabilities.

First, assume that all $p_1,\dots p_k$  are rational; say $p_i=\frac{m_i}{n}$, $i=1,\dots k$.
Consider a modified symbolic space $\hat\Sigma$: this is the space of infinite sequences built with $N:=m_1+m_2+\dots m_k$ digits: 
$$
1^{(1)}, \dots, 1^{(m_1)}; 2^{(1)},\dots, 2^{(m_2)};\dots k^{(1)},\dots, k^{(m_k)}.
$$
Assigning equal probabilities ($=\frac{1}{N}$) to each digit, we obtain a new probability space
$(\hat\Sigma, \hat{\mathbb{P}})$, and a new (formally) IFS, assigning to each digit $i^{(t)}$, $t=1, \dots, m_i$ the same map $g_i$ for $i=1, \ldots, k$.  Denote by $\hat U$ the operator corresponding to this new IFS.
 Note that the natural projection $\Pi:(\hat\Sigma, \hat{\mathbb P})\to (\Sigma,\mathbb P)$ is measure preserving, i.e. $\hat{\mathbb P}(\Pi^{-1}A)=\mathbb P(A)$ for every measurable set $A$.  Thus, the systems $\Gamma$ and $\tilde\Gamma$ share the same  stationary measure $\mu_*$, and
 $$U^{k}\varphi=\hat U^k\varphi.$$
Therefore, estimate  \eqref{eq:diff2} implies that the identical estimate  holds unchanged for the system $(\Gamma, \bf p)$.
 Since this is all what we need to conclude CLT, we are done for this (rational) case.
 
 Fixing say, $\beta=1/4$,  recall that 
the constant $C$ depends on $||\varphi||$, and on  the constants  $m, \gamma, \alpha$, where $m$ comes from \eqref{e1_19.03.17}, $\gamma=(1-q)^{-1}$, where $q$ appears in the definition of $\tilde\Sigma$, see \eqref{pesin}  and  $\alpha=\beta{\mathbb P}(\tilde\Sigma)$, where $\beta=\mu_*(I)/2$.
\vskip2mm
Finally,  let ${\bf p}=(p_1,\dots p_k)$ be an arbitrary probability vector, let $\mu_*$  be the unique invariant measure for this system. Fix $m$ satisfying 
\eqref{e1_19.03.17}.
Choose the set $\tilde\Sigma$,  as in  \eqref{pesin}, and the constant $\delta$ coming from the definition of $\tilde\Sigma$. Put $\beta=\mu_*(I)/2$ and $\alpha=\beta{\mathbb P}(\tilde\Sigma)$, as before.

Now, choose and fix some  $n\ge m$. 
Note that if a rational  probability vector $\hat{\bf p}$, generating the product probability 
distribution $\hat{\mathbb P}$ on $\Sigma$  is close to $\bf p$  then \eqref{e1_19.03.17} still 
holds for the modified system, with the same $m$. Similarly, if $\hat{\bf p}$ is close to ${\bf 
p}$
 and $\hat{\mu}_*$ 
 is the corresponding stationary measure then $\hat\mu_*(I)$ is close to $
\mu_*(I)$.

The  estimates leading to \eqref{eq:diff2} for this rational approximation depend also, formally,  on lower estimate of $\hat{\mathbb P}(\tilde\Sigma)$, the probability which may change  after this approximation. However,  it is easy to see that, with this fixed $n$, the only lower  bound which is used to obtain condition \eqref{eq:diff2} is that of $\hat{\mathbb P}_n(\tilde\Sigma_n)$, where $\tilde\Sigma_n$ is the union of all cylinder sets of length $n$ which intersect $\tilde\Sigma$.
Clearly, given $n$, one can find a rational approximation of $\hat {\bf p}$ so that 
 $\hat{\mathbb P}_n(\tilde\Sigma_n)$ is as close to ${\mathbb P}_n(\tilde\Sigma_n)$ as we wish. Thus, \eqref{eq:diff}, and, in consequence, CLT holds for $(\Gamma, {\bf p})$. The proof is complete. 
\end{proof}

\begin{rem} The same theorem holds for a H\"older continuous observable $\varphi$. The above proof goes through with obvious modifications.
\end{rem}

 \end{document}